\documentclass[12pt,reqno]{article}

\usepackage[usenames]{color}
\usepackage{amssymb}
\usepackage{amsmath}
\usepackage{amsthm}
\usepackage{amsfonts}
\usepackage{amscd}
\usepackage{graphicx}

\usepackage[colorlinks=true,
linkcolor=webgreen,
filecolor=webbrown,
citecolor=webgreen]{hyperref}

\definecolor{webgreen}{rgb}{0,.5,0}
\definecolor{webbrown}{rgb}{.6,0,0}

\usepackage{color}
\usepackage{fullpage}
\usepackage{float}

\usepackage{graphics}
\usepackage{latexsym}

\setlength{\textwidth}{6.5in}
\setlength{\oddsidemargin}{.1in}
\setlength{\evensidemargin}{.1in}
\setlength{\topmargin}{-.1in}
\setlength{\textheight}{8.4in}


\begin{document}

\theoremstyle{plain}
\newtheorem{theorem}{Theorem}
\newtheorem{corollary}[theorem]{Corollary}
\newtheorem{lemma}{Lemma}
\newtheorem{proposition}{Proposition}
\newtheorem{example}{Examples}
\newtheorem*{remark}{Remark}

\begin{center}
\vskip 1cm{\LARGE\bf 
Three new classes of binomial Fibonacci sums \\
}
\vskip 1cm
{\large

Robert Frontczak \\
Independent Researcher \\
Reutlingen,  Germany \\
\href{mailto:robert.frontczak@web.de}{\tt robert.frontczak@web.de}}
\end{center}

\vskip .2 in

\begin{abstract}
In this paper, we introduce three new classes of binomial sums involving Fibonacci (Lucas) numbers and weighted binomial coefficients.
\end{abstract}

\vskip 0.2cm

{\sc Key words and phrases}: Binomial coefficient, Fibonacci number, Lucas number.
\\
{\sc MSC 2000}: 11B37, 11B39.

\section{Introduction and motivation}

As usual, we will use the notation $F_n$ for the $n$th Fibonacci number and $L_n$ for the $n$th Lucas number, respectively.
Both number sequences are defined, for \text{$n\in\mathbb Z$}, through the same recurrence relation $x_n = x_{n-1} + x_{n-2}, n\ge 2,$
with initial values $F_0=0, F_1=1$, and $L_0=2, L_1=1$, respectively. They possess the explicit formulas (Binet forms)
\begin{equation*}
F_n = \frac{\alpha^n - \beta^n }{\alpha - \beta },\quad L_n = \alpha^n + \beta^n,\quad n\in\mathbb Z,
\end{equation*}
where $\alpha=(1+\sqrt{5})/2$ is the golden section and $\beta=-1/\alpha$. 
For negative subscripts one checks easily that $F_{-n}=(-1)^{n-1}F_n$ and $L_{-n}=(-1)^n L_n$.
For more information about these famous sequences we refer, among others, to the books by Koshy \cite{Koshy} and Vajda \cite{Vajda}.
In addition, one can consult the On-Line Encyclopedia of Integer Sequences
\cite{OEIS} where these sequences are listed under the ids {A000045} and A000032, respectively.

The literature on Fibonacci numbers is very rich. Dozens of articles and problem proposals 
dealing with binomial sum identities involving these sequences exist. 
Classical articles on the topic are \cite{Carlitz,CarFer,Hoggatt,Hoggatt2,Layman,Zeitlin}, among others. 
Newer contributions include \cite{Jen,Long} and recent articles are \cite{Adegoke1,Adegoke2,Adegoke3,Adegoke4,Bai,Kilic1,Kilic2}.

This note is motivated by the problem proposal \cite{Frontczak} where the author asked to prove the identities
$$
\sum_{k=0}^n \binom{n}{k}\frac{F_k+L_k}{k+1}=\frac{F_{2n+1}+L_{2n+1}}{n+1}\quad {\text{\rm and}}\quad 
\sum_{k=0}^n \binom{n}{k}\frac{F_k+L_k}{(k+1)(k+2)}=\frac{F_{2n+2}+L_{2n+2}-2}{(n+1)(n+2)}.
$$
A solution with a slight generalization was provided by Ventas in \cite{Ventas}.
Here, we introduce some generalized variants of this proposal which should be regarded as attractive complements. 
More precisely, we present three presumable new classes of Fibonacci (Lucas) binomial sums possessing the same structure.
Our results follow from three recently published polynomial identities derived by Dattoli et al. \cite{Dattoli}. 
They are given by ($x\in\mathbb{C}$)
\begin{equation}\label{Dat1}
\sum_{k=0}^n \binom {n}{k} (-1)^k \frac{1}{k+1} x^{k+1} (1+x)^{n-k} = \frac{(1+x)^{n+1} - 1}{n+1},
\end{equation}
\begin{equation}\label{Dat2}
\sum_{k=0}^n \binom {n}{k} (-1)^k \frac{1}{k+2} x^{k+2} (1+x)^{n-k} = \frac{(1+x)^{n+2} - (n+2)x - 1}{(n+1)(n+2)},
\end{equation}
and
\begin{equation}\label{Dat3}
\sum_{k=0}^n \binom {n}{k} (-1)^k \frac{1}{(k+1)(k+2)} x^{k+2} (1+x)^{n-k} = \frac{(n+1)x(1+x)^{n+1} - (1+x)^{n+1} + 1}{(n+1)(n+2)}.
\end{equation}

In the course of derivation we will make use of the following known results.

\begin{lemma}\label{lem_help}
For any integer $s$, we have
\begin{equation}
(- 1)^s + \alpha^{2s} = \alpha^s L_s, \qquad \mbox{and}\qquad (- 1)^s + \beta^{2s} = \beta^s L_s.
\end{equation}
\end{lemma}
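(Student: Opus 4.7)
The proof will rest on the single fact that $\alpha\beta=-1$, which follows immediately from $\alpha=(1+\sqrt{5})/2$ and $\beta=(1-\sqrt{5})/2=-1/\alpha$ (this is already noted in the excerpt). Raising to the $s$-th power yields $(\alpha\beta)^s = (-1)^s$, that is, $\alpha^s\beta^s = (-1)^s$, and this identity is valid for every integer $s$ (positive, negative, or zero) since $\alpha\beta\neq 0$.

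The plan is then to substitute this into the left-hand side of each identity and factor. For the first identity, I would write
\[
(-1)^s + \alpha^{2s} \;=\; \alpha^s\beta^s + \alpha^{2s} \;=\; \alpha^s(\beta^s + \alpha^s) \;=\; \alpha^s L_s,
\]
using the Binet formula $L_s=\alpha^s+\beta^s$ at the last step. Symmetrically, for the second identity,
\[
(-1)^s + \beta^{2s} \;=\; \alpha^s\beta^s + \beta^{2s} \;=\; \beta^s(\alpha^s + \beta^s) \;=\; \beta^s L_s.
\]

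There is no real obstacle: the only thing to be slightly careful about is ensuring the argument covers negative $s$ as well, but since $\alpha$ and $\beta$ are nonzero units in $\mathbb{Z}[\alpha]$ and the Binet form for $L_s$ holds for all $s\in\mathbb{Z}$, the factorization is legitimate throughout. Thus both claims follow from a single two-line manipulation.
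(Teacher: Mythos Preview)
Your argument is correct and complete: the substitution $(-1)^s=\alpha^s\beta^s$ followed by factoring and the Binet form $L_s=\alpha^s+\beta^s$ is exactly the natural proof, and your remark about negative $s$ is well taken. The paper itself states this lemma without proof as a known result, so there is no authorial argument to compare against; what you have written is the standard two-line verification.
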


\begin{lemma}
Let $r$ and $s$ be any integers. Then it holds that \cite{Hoggatt}
\begin{align}
L_{r + s} - L_r \alpha ^s &= - \beta ^r F_s \sqrt 5, \label{hog1} \\ 
L_{r + s} - L_r \beta ^s &= \alpha ^r F_s \sqrt 5, \label{hog2} \\ 
F_{r + s} - F_r \alpha ^s &= \beta ^r F_s, \label{hog3} \\ 
F_{r + s} - F_r \beta ^s &= \alpha ^r F_s \label{hog4}. 
\end{align}
\end{lemma}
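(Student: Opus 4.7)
The plan is to derive all four identities by direct substitution of the Binet formulas recalled in the introduction. Since $L_n = \alpha^n + \beta^n$ and $F_n \sqrt{5} = \alpha^n - \beta^n$ for every integer $n$, both sides of each equation become an explicit combination of the quantities $\alpha^{r+s}$, $\beta^{r+s}$, $\alpha^r\beta^s$, and $\beta^r\alpha^s$, so the four claims should collapse to algebraic tautologies.

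First I would treat \eqref{hog1}. Expanding
\[
L_{r+s} - L_r\alpha^s = (\alpha^{r+s}+\beta^{r+s}) - (\alpha^r+\beta^r)\alpha^s,
\]
the $\alpha^{r+s}$ terms cancel, leaving $\beta^{r+s} - \beta^r\alpha^s = \beta^r(\beta^s-\alpha^s) = -\beta^r F_s \sqrt{5}$, as required. Identity \eqref{hog2} follows by the same manipulation with $\alpha$ and $\beta$ interchanged throughout.

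For \eqref{hog3}, I would multiply both sides by $\sqrt{5}$ and invoke the Binet form for $F_n$, obtaining
\[
(\alpha^{r+s}-\beta^{r+s}) - (\alpha^r-\beta^r)\alpha^s = \beta^r\alpha^s - \beta^{r+s} = \beta^r(\alpha^s-\beta^s) = \beta^r F_s \sqrt{5},
\]
and dividing through by $\sqrt{5}$ yields the claim. Identity \eqref{hog4} is proved in the same way by swapping the roles of $\alpha$ and $\beta$.

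The main — in fact, only — obstacle here is clerical bookkeeping: one must track signs and distinguish carefully between $\alpha$ and $\beta$ in each expansion, since the four identities differ only by such sign and symbol interchanges. No induction, recursion, or auxiliary identity beyond the Binet formulas themselves is required, and in particular the identity $\alpha - \beta = \sqrt{5}$ is used implicitly only through the definition of $F_n$.
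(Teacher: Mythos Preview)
Your proof is correct and complete: each of the four identities reduces, via the Binet forms $L_n=\alpha^n+\beta^n$ and $F_n\sqrt{5}=\alpha^n-\beta^n$, to a cancellation yielding $\pm\beta^r(\alpha^s-\beta^s)$ or $\pm\alpha^r(\alpha^s-\beta^s)$, exactly as you describe. The paper itself does not prove this lemma at all; it simply quotes the identities from Hoggatt and Bicknell \cite{Hoggatt} as known facts, so your argument supplies strictly more detail than the paper does.
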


\section{First set of results}

\begin{theorem}\label{main_thm1}
If $r$, $s$ and $t$ are any integers and $n$ is a non-negative integer, then
\begin{align}
&\sum_{k = 0}^n \binom{n}{k} \frac{1}{k+1} (-1)^{s(k+1)+t} F_r^{k+1} F_s^{n-k} F_{rn-s(k+1)-rk-t}  \nonumber \\
&\qquad\qquad\qquad\qquad = \frac{1}{n+1}\Big ( (-1)^{t+1} F_s^{n+1} F_{r(n+1)-t} - F_t F_{r+s}^{n+1} \Big )
\end{align}
and 
\begin{align}
&\sum_{k = 0}^n \binom{n}{k} \frac{1}{k+1} (-1)^{s(k+1)+1+t} F_r^{k+1} F_s^{n-k} L_{rn-s(k+1)-rk-t}  \nonumber \\
&\qquad\qquad\qquad\qquad = \frac{1}{n+1}\Big ( (-1)^{t} F_s^{n+1} L_{r(n+1)-t} - L_t F_{r+s}^{n+1} \Big ).
\end{align}
\end{theorem}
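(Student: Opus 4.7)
The plan is to decompose the moving-index Fibonacci (or Lucas) factor via the Binet formulas $F_m=(\alpha^m-\beta^m)/\sqrt 5$ and $L_m=\alpha^m+\beta^m$, apply a homogeneous two-variable reformulation of \eqref{Dat1} separately to the $\alpha$-part and the $\beta$-part, and then recombine. The preliminary reformulation is obtained by substituting $x\mapsto -y$ in \eqref{Dat1} and then rescaling $y\mapsto y/(y+w)$, clearing $(y+w)^{n+1}$ through the denominator. Both sides become polynomials in $y,w$, so the resulting identity
\begin{equation*}
\sum_{k=0}^n\binom{n}{k}\frac{y^{k+1}w^{n-k}}{k+1}=\frac{(y+w)^{n+1}-w^{n+1}}{n+1}
\end{equation*}
is valid for all $y,w\in\mathbb{C}$, and it eliminates the awkward constraint that the bases sum to $1$.

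Next, I would write $F_{r(n-k)-s(k+1)-t}=(\alpha^{r(n-k)-s(k+1)-t}-\beta^{r(n-k)-s(k+1)-t})/\sqrt 5$ and split the left-hand side of the first claim as $(S_\alpha-S_\beta)/\sqrt 5$. Using $\beta^s=(-1)^s\alpha^{-s}$, the generic $\alpha$-summand factors cleanly as $(-1)^t\alpha^{-t}(F_r\beta^s)^{k+1}(F_s\alpha^r)^{n-k}$, so the displayed identity applies with $y=F_r\beta^s$ and $w=F_s\alpha^r$. The decisive observation is that $y+w=F_r\beta^s+F_s\alpha^r=F_{r+s}$, which is exactly \eqref{hog4}; it collapses $S_\alpha$ to
$$
S_\alpha=\frac{(-1)^t\alpha^{-t}}{n+1}\bigl(F_{r+s}^{n+1}-F_s^{n+1}\alpha^{r(n+1)}\bigr).
$$
The $\beta$-calculation is entirely parallel but invokes \eqref{hog3} in the form $F_r\alpha^s+F_s\beta^r=F_{r+s}$.

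Finally, forming $(S_\alpha-S_\beta)/\sqrt 5$ and applying Binet to $\alpha^{-t}-\beta^{-t}=\sqrt 5\,F_{-t}$ and $\alpha^{r(n+1)-t}-\beta^{r(n+1)-t}=\sqrt 5\,F_{r(n+1)-t}$, together with $F_{-t}=(-1)^{t-1}F_t$, produces the stated right-hand side. The Lucas identity runs along the same track: the additional factor $-1$ in the signature $(-1)^{s(k+1)+1+t}$ and the decomposition $L_m=\alpha^m+\beta^m$ mean that one now forms $-(S_\alpha+S_\beta)$, and the analogue $L_{-t}=(-1)^tL_t$ then delivers $(-1)^tL_{r(n+1)-t}$ and $L_t$ in the numerator exactly as claimed. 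I expect the only step with genuine content to be recognising the Hoggatt identity $F_r\beta^s+F_s\alpha^r=F_{r+s}$ after rewriting $(-1)^s\alpha^{-s}$ as $\beta^s$; once this is spotted, the rest of the argument is careful bookkeeping of signs and exponents.
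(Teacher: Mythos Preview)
Your proof is correct and follows essentially the same route as the paper: Binet-decompose the moving-index factor, specialise \eqref{Dat1} via the Hoggatt identities \eqref{hog3}--\eqref{hog4} so that the relevant base collapses to $F_{r+s}$, and recombine. Your preliminary homogenisation of \eqref{Dat1} into the two-variable polynomial identity $\sum_{k}\binom{n}{k}\frac{y^{k+1}w^{n-k}}{k+1}=\frac{(y+w)^{n+1}-w^{n+1}}{n+1}$ is a tidy organisational device (and incidentally covers the degenerate case $F_{r+s}=0$ that the paper's direct substitution $x=-F_r\alpha^s/F_{r+s}$ must avoid), but the underlying argument is the same.
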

\begin{proof}
Set $x=-F_{r}\alpha^s/F_{r+s}$ in~\eqref{Dat1}, use~\eqref{hog3} and multiply through by $\alpha^t$, to obtain
\begin{equation*}
\sum_{k = 0}^n \binom{n}{k} \frac{1}{k+1} (-1)^{r(n-k)+1} F_{r}^{k+1} F_s^{n - k} \alpha^{k(s+r) - rn + s + t}  
= \frac{1}{n+1}\Big ( (-1)^{r(n+1)} F_s^{n+1} \alpha^{-r(n+1)+t} - \alpha^t F_{r+s}^{n+1}\Big ).
\end{equation*}
Similarly, setting $x=-F_{r}\beta^s/F_{r+s}$ in~\eqref{Dat1}, using~\eqref{hog4} and multiplying through by $\beta^t$, yields
\begin{equation*}
\sum_{k = 0}^n \binom{n}{k} \frac{1}{k+1} (-1)^{r(n-k)+1} F_{r}^{k+1} F_s^{n - k} \beta^{k(s+r) - rn + s + t}  
= \frac{1}{n+1}\Big ( (-1)^{r(n+1)} F_s^{n+1} \beta^{-r(n+1)+t} - \beta^t F_{r+s}^{n+1}\Big ).
\end{equation*}
The results follow by combining these identities according to the Binet forms while applying $F_{-n}=(-1)^{n-1}F_n$ and $L_{-n}=(-1)^n L_n$.
\end{proof}

Theorem \ref{main_thm1} contains many interesting identities as special cases which are presented as corollaries.

\begin{corollary}\label{cor1}
\begin{equation}
\sum_{k = 0}^n \binom{n}{k} \frac{1}{k+1} (-1)^{k} F_{n-2k-1+t} = \frac{1}{n+1}\Big ( F_{n+1+t} - F_t \Big )
\end{equation}
and
\begin{equation}
\sum_{k = 0}^n \binom{n}{k} \frac{1}{k+1} (-1)^{k} L_{n-2k-1+t} = \frac{1}{n+1}\Big ( L_{n+1+t} - L_t \Big ).
\end{equation}
\end{corollary}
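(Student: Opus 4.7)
The plan is to obtain both identities of Corollary~\ref{cor1} directly from Theorem~\ref{main_thm1} by specializing $r = s = 1$. Since $F_1 = F_2 = 1$, all the Fibonacci-power factors $F_r^{k+1}$, $F_s^{n-k}$, $F_s^{n+1}$, and $F_{r+s}^{n+1}$ collapse to $1$, so the identity's only nontrivial ingredients are the subscript of the Fibonacci/Lucas term, which reduces to $rn - s(k+1) - rk - t = n - 2k - 1 - t$, and the sign factors, which become $(-1)^{k+1+t}$ on the Fibonacci side and $(-1)^{k+t}$ on the Lucas side. On the right-hand side the subscript $r(n+1) - t$ becomes $n+1-t$.

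To bring the identity into the form stated in the corollary, in which $t$ appears with a positive sign both in the index $F_{n-2k-1+t}$ and in $F_{n+1+t}$, I would then replace $t$ by $-t$ throughout and invoke the reflection formulas $F_{-t} = (-1)^{t-1} F_t$ and $L_{-t} = (-1)^t L_t$ from the introduction, together with the trivial fact that $(-1)^{-t} = (-1)^t$. After these substitutions, every leftover $t$-dependent sign factor on the right collapses into a single global factor: $(-1)^{t+1}$ in the Fibonacci case and $(-1)^t$ in the Lucas case. The same global factor appears (up to $(-1)^k$) on the left, so multiplying both sides of the equation by that global sign removes it entirely, leaving $(-1)^k$ as the only remaining sign in the summand and matching the corollary exactly.

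The only real obstacle is arithmetic bookkeeping: tracking the various $t$-dependent parities carefully enough to see that the global signs on the two sides really do cancel against each other. No new mathematical content is needed beyond Theorem~\ref{main_thm1} and the reflection identities $F_{-t} = (-1)^{t-1}F_t$, $L_{-t} = (-1)^t L_t$, and the Fibonacci and Lucas assertions come out of the same computation with $F$ replaced by $L$ (and $(-1)^{t+1}$ replaced by $(-1)^t$), so both lines of the corollary can be handled in parallel.
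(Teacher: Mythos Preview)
Your proposal is correct and is exactly the specialization the paper has in mind: the paper presents Corollary~\ref{cor1} as an immediate special case of Theorem~\ref{main_thm1} without further comment, and the choice $r=s=1$ (followed by the relabeling $t\mapsto -t$ and the reflection formulas) is the natural and only way to collapse the general identity to this one. Your sign bookkeeping is accurate.
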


\begin{corollary}\label{cor2}
\begin{equation}
\sum_{k = 0}^n \binom{n}{k} \frac{1}{k+1} (-1)^{t} F_{n-3k-2-t} = \frac{1}{n+1}\Big ((-1)^{t+1} F_{n+1-t} - F_t 2^{n+1} \Big )
\end{equation}
and
\begin{equation}
\sum_{k = 0}^n \binom{n}{k} \frac{1}{k+1} (-1)^{t+1} L_{n-3k-2-t} = \frac{1}{n+1}\Big ( (-1)^{t} L_{n+1-t} - L_t 2^{n+1} \Big ).
\end{equation}
\end{corollary}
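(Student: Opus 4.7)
The plan is to obtain Corollary~\ref{cor2} as a specialization of Theorem~\ref{main_thm1}, exactly parallel to the way Corollary~\ref{cor1} clearly arises from the choice $r=s=1$. The presence of the factor $2^{n+1}$ on the right-hand side is the guiding clue: since $F_{r+s}^{n+1}$ must equal $2^{n+1}$, we need $F_{r+s}=2$, that is $r+s=3$, and the absence of any other Fibonacci-power weights on either side further forces $F_r=F_s=1$. The unique solution in small positive indices is $r=1$, $s=2$.

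With this choice I would first verify the Fibonacci identity. The sign exponent $s(k+1)+t=2(k+1)+t$ is congruent to $t$ modulo $2$, so $(-1)^{s(k+1)+t}$ collapses to $(-1)^{t}$. The Fibonacci subscript $rn-s(k+1)-rk-t$ reduces to $n-3k-2-t$, while the weights $F_r^{k+1}$ and $F_s^{n-k}$ both equal $1$. On the right-hand side, $F_s^{n+1}=1$, $F_{r(n+1)-t}=F_{n+1-t}$, and $F_{r+s}^{n+1}=2^{n+1}$, producing $\frac{1}{n+1}\bigl((-1)^{t+1}F_{n+1-t}-F_t\,2^{n+1}\bigr)$, which is exactly the asserted right-hand side.

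The Lucas identity follows by applying the identical substitution to the second part of Theorem~\ref{main_thm1}. The sign exponent $s(k+1)+1+t$ reduces to $(-1)^{t+1}$ by the same parity calculation, the subscript of $L$ once again simplifies to $n-3k-2-t$, and the right-hand side collapses to $\frac{1}{n+1}\bigl((-1)^{t}L_{n+1-t}-L_t\,2^{n+1}\bigr)$. There is essentially no obstacle here — everything is mechanical substitution; the only item requiring attention is the parity reduction of the sign exponents, which is trivial because $s=2$ eliminates all $k$-dependence from them.
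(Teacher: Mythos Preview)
Your proposal is correct and is precisely the intended derivation: Corollary~\ref{cor2} is the specialization $r=1$, $s=2$ of Theorem~\ref{main_thm1}, and your parity and subscript checks are accurate. One tiny quibble: the constraints $F_r=F_s=1$, $F_{r+s}=2$ do not single out $(r,s)=(1,2)$ uniquely---$(r,s)=(2,1)$ also satisfies them (and in fact yields Corollary~\ref{cor3})---but this is merely a remark about your heuristic narrative and does not affect the validity of the verification.
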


\begin{corollary}\label{cor3}
\begin{equation}
\sum_{k = 0}^n \binom{n}{k} \frac{1}{k+1} (-1)^{k} F_{2n-3k-1+t} = \frac{1}{n+1}\Big (F_{2n+2+t} - F_t 2^{n+1} \Big )
\end{equation}
and
\begin{equation}
\sum_{k = 0}^n \binom{n}{k} \frac{1}{k+1} (-1)^{k} L_{2n-3k-1+t} = \frac{1}{n+1}\Big (L_{2n+2+t} - L_t 2^{n+1} \Big ).
\end{equation}
\end{corollary}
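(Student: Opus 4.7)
The plan is to specialize Theorem~\ref{main_thm1} to the case $r = 2$, $s = 1$. With these choices we have $F_r = F_2 = 1$, $F_s = F_1 = 1$, and $F_{r+s} = F_3 = 2$. Hence the factors $F_r^{k+1}$ and $F_s^{n-k}$ collapse to $1$, while $F_{r+s}^{n+1}$ produces exactly the factor $2^{n+1}$ appearing on the right-hand side of the corollary. The coefficient of $k$ in the Fibonacci/Lucas subscript becomes $-(r+s) = -3$, and the coefficient of $n$ becomes $r = 2$, which matches the subscript $2n - 3k - 1$ in the corollary.

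For the Fibonacci identity, substituting $r = 2$, $s = 1$ in the first identity of Theorem~\ref{main_thm1} gives
\begin{equation*}
\sum_{k=0}^n \binom{n}{k} \frac{(-1)^{k+1+t}}{k+1} F_{2n-3k-1-t} = \frac{1}{n+1}\Big( (-1)^{t+1} F_{2n+2-t} - F_t \cdot 2^{n+1}\Big).
\end{equation*}
Since $t$ is an arbitrary integer, I then replace $t$ by $-t$ throughout and apply the parity identity $F_{-t} = (-1)^{t-1} F_t$ together with $(-1)^{-t} = (-1)^t$. Multiplying the resulting equation by $(-1)^{t+1}$ reduces the left-hand sign to $(-1)^k$ and, on the right, collapses $(-1)^{t+1}\cdot(-1)^{t+1} = 1$ and $(-1)^{t+1}\cdot(-1)^{t-1} = 1$, producing the announced form.

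The Lucas identity is obtained in the same way from the second identity of Theorem~\ref{main_thm1}. The substitution $r=2$, $s=1$ yields a left-hand sign $(-1)^{k+t}$; after the substitution $t \mapsto -t$ and the use of $L_{-t} = (-1)^t L_t$, multiplication of both sides by $(-1)^t$ cancels the remaining $t$-dependence and gives the stated identity.

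There is no real obstacle here beyond routine sign bookkeeping; the only step that deserves care is the simultaneous application of $(-1)^{-t}=(-1)^t$ and the reflection formulas $F_{-t}=(-1)^{t-1}F_t$, $L_{-t}=(-1)^t L_t$ after the substitution $t \mapsto -t$, so that every parity factor cancels consistently on both sides.
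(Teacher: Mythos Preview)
Your proof is correct and follows exactly the approach implicit in the paper: the corollaries are presented without proof as specializations of Theorem~\ref{main_thm1}, and Corollary~\ref{cor3} is precisely the case $r=2$, $s=1$ followed by the substitution $t\mapsto -t$ that you carry out. Your sign bookkeeping is accurate in both the Fibonacci and Lucas cases.
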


\begin{corollary}\label{cor4}
\begin{equation}
\sum_{k = 0}^n \binom{n}{k} \frac{1}{k+1} (-1)^{t} F_{2n-4k-2-t} = \frac{1}{n+1}\Big ((-1)^{t+1} F_{2n+2-t} - F_t 3^{n+1} \Big )
\end{equation}
and
\begin{equation}
\sum_{k = 0}^n \binom{n}{k} \frac{1}{k+1} (-1)^{t+1} L_{2n-4k-2-t} = \frac{1}{n+1}\Big ((-1)^t L_{2n+2-t} - L_t 3^{n+1} \Big ).
\end{equation}
\end{corollary}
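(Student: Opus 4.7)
The plan is to deduce Corollary \ref{cor4} as the specialization $r=s=2$ of Theorem \ref{main_thm1}, continuing the pattern of Corollaries \ref{cor1}--\ref{cor3} (which correspond to $(r,s)=(1,1)$, $(1,2)$, $(2,1)$, respectively).

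First I would record the Fibonacci values that are needed: since $F_2 = 1$ and $F_4 = 3$, the factors in the summand collapse to $F_r^{k+1} = F_s^{n-k} = 1$ and $F_{r+s}^{n+1} = 3^{n+1}$. The index of the Fibonacci (respectively Lucas) number in the summand becomes
\[
rn - s(k+1) - rk - t \;=\; 2n - 4k - 2 - t,
\]
which is precisely the index appearing in the statement of Corollary \ref{cor4}. The sign factor in the Fibonacci identity simplifies to $(-1)^{s(k+1)+t} = (-1)^{2(k+1)+t} = (-1)^t$, and in the Lucas identity to $(-1)^{s(k+1)+1+t} = (-1)^{t+1}$; in both cases the sign no longer depends on $k$.

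Turning to the right-hand side of Theorem \ref{main_thm1}, the same substitution yields $F_s^{n+1} F_{r(n+1)-t} = F_{2n+2-t}$ and $F_t F_{r+s}^{n+1} = F_t\cdot 3^{n+1}$, with the analogous formulas with $L$ in place of the first $F$ for the Lucas companion. Putting the pieces together produces exactly the two identities asserted in Corollary \ref{cor4}.

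The only real work is sign bookkeeping, but the pleasant feature driving the clean form of Corollary \ref{cor4} is that choosing $s=2$ makes $s(k+1)$ even, so that the $k$-dependent part of the parity vanishes and the overall sign reduces to a function of $t$ alone. For this reason I anticipate no substantive obstacle beyond carefully tracking the exponents of $-1$ through the specialization.
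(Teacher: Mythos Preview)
Your proposal is correct and matches the paper's approach: the paper presents Corollaries \ref{cor1}--\ref{cor8} as direct specializations of Theorem \ref{main_thm1} without separate proofs, and Corollary \ref{cor4} is exactly the case $r=s=2$ you identify, with the sign and index computations carried out just as you describe.
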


\begin{corollary}\label{cor5}
\begin{equation}
\sum_{k = 0}^n \binom{n}{k} \frac{1}{k+1} (-1)^{k} F_{2n-k+1+t} = \frac{1}{n+1}\Big (F_{2n+2+t} - F_t \Big )
\end{equation}
and
\begin{equation}
\sum_{k = 0}^n \binom{n}{k} \frac{1}{k+1} (-1)^{k} L_{2n-k+1+t} = \frac{1}{n+1}\Big (L_{2n+2+t} - L_t \Big ).
\end{equation}
\end{corollary}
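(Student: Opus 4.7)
The stated pair of identities should emerge as the specialization of Theorem~\ref{main_thm1} at $r = 2$, $s = -1$, followed by the substitution $t \mapsto -t$. My plan is to execute this specialization and then tidy the signs.

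First I would observe that with $(r,s) = (2,-1)$ we have $F_r = F_2 = 1$, $F_s = F_{-1} = 1$, and $F_{r+s} = F_1 = 1$, so every product factor $F_r^{k+1}$, $F_s^{n-k}$, $F_s^{n+1}$, $F_{r+s}^{n+1}$ appearing in Theorem~\ref{main_thm1} collapses to $1$. The index $rn - s(k+1) - rk - t$ inside the sum reduces to $2n - k + 1 - t$, while $F_{r(n+1)-t}$ and $L_{r(n+1)-t}$ on the right become $F_{2n+2-t}$ and $L_{2n+2-t}$. The sign exponent $s(k+1)+t$ becomes $-(k+1)+t$, which has the same parity as $k+1+t$; in the Lucas variant, $s(k+1)+1+t$ becomes $k+t \pmod 2$.

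Next I would divide both sides of the Fibonacci specialization by $(-1)^{t+1}$ (and the Lucas one by $(-1)^t$), so that the sign inside the sum becomes the required $(-1)^k$. Using $-(-1)^{t+1} = (-1)^t$, the Fibonacci identity then reads
\[
\sum_{k=0}^n \binom{n}{k}\frac{1}{k+1}(-1)^k F_{2n-k+1-t} = \frac{1}{n+1}\bigl(F_{2n+2-t} + (-1)^t F_t\bigr),
\]
and analogously for Lucas. Finally, I would replace $t$ by $-t$ and invoke $F_{-t} = (-1)^{t+1}F_t$ and $L_{-t} = (-1)^t L_t$; the stray sign on the right-hand side then folds into a plain subtraction, producing $\frac{1}{n+1}(F_{2n+2+t} - F_t)$ and $\frac{1}{n+1}(L_{2n+2+t} - L_t)$, as desired.

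The only genuine obstacle is the sign bookkeeping, which is slightly delicate because $s = -1$ forces repeated use of $(-1)^{-m} = (-1)^m$ together with the negative-subscript identities for $F$ and $L$. Once the signs are tracked correctly, no further analytic input is needed: Corollary~\ref{cor5} is a direct substitution in Theorem~\ref{main_thm1}.
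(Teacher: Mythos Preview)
Your proposal is correct and follows exactly the route the paper intends: Corollary~\ref{cor5} is the specialization of Theorem~\ref{main_thm1} at $r=2$, $s=-1$ (the unique choice making all the $F_r$, $F_s$, $F_{r+s}$ factors equal to $1$ while producing the index $2n-k+1$), followed by the substitution $t\mapsto -t$ and the negative-subscript identities $F_{-t}=(-1)^{t+1}F_t$, $L_{-t}=(-1)^tL_t$. Your sign bookkeeping is accurate in both the Fibonacci and Lucas cases.
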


\begin{corollary}\label{cor6}
\begin{equation}
\sum_{k = 0}^n \binom{n}{k} \frac{1}{k+1} (-1)^{n+k+1} 2^{n-k} F_{n+2k+3+t} = \frac{1}{n+1}\Big ((-2)^{n+1} F_{n+1+t} - F_t \Big )
\end{equation}
and
\begin{equation}
\sum_{k = 0}^n \binom{n}{k} \frac{1}{k+1} (-1)^{n+k+1} 2^{n-k} L_{n+2k+3+t} = \frac{1}{n+1}\Big ((-2)^{n+1} L_{n+1+t} - L_t \Big ).
\end{equation}
\end{corollary}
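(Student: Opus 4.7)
The plan is to obtain Corollary \ref{cor6} by specializing Theorem \ref{main_thm1} at $r=-1$ and $s=3$. The reason these are the right parameters: to produce the weight $2^{n-k}$ and the coefficient $(-2)^{n+1}$ on the right-hand side, I need $F_s=2$ and $F_{r+s}=1$, while $F_r=\pm 1$ keeps $F_r^{k+1}$ harmless. The pair $(r,s)=(-1,3)$ gives exactly $F_{-1}=1$, $F_3=2$, $F_2=1$, and the sign contributed by $F_r^{k+1}$ is trivial.

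Concretely, I would substitute $r=-1$, $s=3$ into both identities of Theorem \ref{main_thm1} without yet touching the subscripts. The argument of the Fibonacci/Lucas number on the left becomes
$$rn-s(k+1)-rk-t = -n-3(k+1)+k-t = -(n+2k+3+t),$$
and the argument on the right becomes $r(n+1)-t=-(n+1+t)$. The outer sign on the left is $(-1)^{s(k+1)+t}=(-1)^{3(k+1)+t}=(-1)^{k+t+1}$ in the Fibonacci case (and one more $-1$ in the Lucas case).

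Next I would flip the negative subscripts via Lemma-level identities $F_{-m}=(-1)^{m-1}F_m$ and $L_{-m}=(-1)^m L_m$. Because $2k+2$ is even, these give $F_{-(n+2k+3+t)}=(-1)^{n+t}F_{n+2k+3+t}$ and $F_{-(n+1+t)}=(-1)^{n+t}F_{n+1+t}$, with the analogous Lucas conversions picking up one extra sign. Collecting exponents on the left yields $(-1)^{(k+t+1)+(n+t)}=(-1)^{n+k+1}$, matching the target. On the right, $(-1)^{(t+1)+(n+t)}\,2^{n+1}=(-1)^{n+1}2^{n+1}=(-2)^{n+1}$, producing the stated closed form. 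The Lucas identity follows by the identical route applied to the second half of Theorem \ref{main_thm1}.

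No genuine obstacle arises here; the only thing requiring care is the sign bookkeeping, since several factors of $-1$ whose exponents depend on $k$, $n$, and $t$ must be combined in the correct order. Everything else is a direct substitution into an already-proved identity.
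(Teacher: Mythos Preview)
Your proposal is correct and follows exactly the approach the paper intends: Corollary~\ref{cor6} is obtained by specializing Theorem~\ref{main_thm1}, and your choice $(r,s)=(-1,3)$ (so that $F_r=1$, $F_s=2$, $F_{r+s}=1$) together with the reflection formulas $F_{-m}=(-1)^{m-1}F_m$, $L_{-m}=(-1)^mL_m$ reproduces both identities with the sign bookkeeping you describe. The paper does not spell out the particular $(r,s)$ for each corollary, but your derivation is precisely the kind of direct substitution it has in mind.
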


\begin{corollary}\label{cor7}
\begin{equation}
\sum_{k = 0}^n \binom{n}{k} \frac{1}{k+1} (-1)^{k} 2^{n-k} F_{2n+k+3+t} = \frac{1}{n+1}\Big (2^{n+1} F_{2n+2+t} - F_t \Big )
\end{equation}
and
\begin{equation}
\sum_{k = 0}^n \binom{n}{k} \frac{1}{k+1} (-1)^{k} 2^{n-k} L_{2n+k+3+t} = \frac{1}{n+1}\Big (2^{n+1} L_{2n+2+t} - L_t \Big ).
\end{equation}
\end{corollary}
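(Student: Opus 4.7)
My plan is to derive Corollary~\ref{cor7} as a direct specialization of Theorem~\ref{main_thm1} with the parameter choice $r=2$, $s=-3$, followed by the relabeling $t\mapsto -t$ and a final global sign normalization. The motivation for this choice is transparent: $r=2$ gives $F_r=1$ and removes the $F_r^{k+1}$ factor from the summand; $s=-3$ gives $F_s=F_{-3}=F_3=2$, which produces the desired geometric weight $2^{n-k}$; and $r+s=-1$ yields $F_{r+s}=F_{-1}=1$, so that the factor $F_{r+s}^{n+1}$ on the right-hand side of Theorem~\ref{main_thm1} collapses to $1$, matching the form of Corollary~\ref{cor7}. A short index computation confirms that $rn-s(k+1)-rk-(-t)$ reduces to $2n+k+3+t$, and that $r(n+1)-(-t)=2n+2+t$, so the Fibonacci (respectively Lucas) subscripts line up on both sides.

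Next I will deal with the signs. Under the same substitution the exponent on $-1$ in the summand equals $(-3)(k+1)+(-t)$ in the Fibonacci version and $(-3)(k+1)+1+(-t)$ in the Lucas version, which reduce modulo~$2$ to $k+1+t$ and $k+t$ respectively. On the right-hand side, the relation $(-1)^{-t+1}=(-1)^{t+1}$, combined with the negative-index rules $F_{-t}=(-1)^{t+1}F_t$ and $L_{-t}=(-1)^tL_t$, allows me to factor out an overall sign of $(-1)^{t+1}$ in the Fibonacci identity and $(-1)^t$ in the Lucas identity. Multiplying both sides of the resulting equation by that same global sign collapses the summand exponent to $(-1)^k$ and simultaneously removes the extraneous signs on the right, producing exactly the two formulas stated in Corollary~\ref{cor7}.

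The argument is essentially mechanical; the only mild obstacle is tracking sign parities consistently through the index reversals $F_{-n}=(-1)^{n-1}F_n$ and $L_{-n}=(-1)^n L_n$, which appear both inside the sum (through $F_{-3}$) and on the right-hand side (through $F_{-t}$ and $L_{-t}$). It is worth writing the substitution out in full once to pin down the signs, after which the rest is bookkeeping. One should also note in passing that the derivation of Theorem~\ref{main_thm1} via the substitution $x=-F_r\alpha^s/F_{r+s}$ in~\eqref{Dat1} requires $F_{r+s}\ne 0$, a condition trivially satisfied here since $F_{-1}=1$, so the specialization is legitimate.
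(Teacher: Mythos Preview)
Your proof is correct and follows precisely the paper's approach: the paper states that Corollaries~\ref{cor1}--\ref{cor8} are special cases of Theorem~\ref{main_thm1}, and your choice $(r,s)=(2,-3)$ together with $t\mapsto -t$ is a valid specialization yielding Corollary~\ref{cor7}, with the sign bookkeeping handled correctly. As a minor remark, the alternative choice $(r,s)=(-2,3)$ (with no relabeling of $t$) reaches the same result slightly more directly, since then $F_r^{k+1}=(-1)^{k+1}$ and the negative Fibonacci index $F_{-2n-k-3-t}$ already absorbs the residual signs without an extra substitution; but either route is equally legitimate.
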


\begin{corollary}\label{cor8}
\begin{equation}
\sum_{k = 0}^n \binom{n}{k} \frac{1}{k+1} (-1)^{k} 3^{n-k} F_{2(n+k+2)+t} = \frac{1}{n+1}\Big (3^{n+1} F_{2n+2+t} - F_t \Big )
\end{equation}
and
\begin{equation}
\sum_{k = 0}^n \binom{n}{k} \frac{1}{k+1} (-1)^{k} 3^{n-k} L_{2(n+k+2)+t} = \frac{1}{n+1}\Big (3^{n+1} L_{2n+2+t} - L_t \Big ).
\end{equation}
\end{corollary}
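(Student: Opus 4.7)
The corollary should follow as a direct specialization of Theorem~\ref{main_thm1} to $r=-2$, $s=4$. The choice is forced by matching coefficients: the factor $3^{n-k}$ on the left must come from $F_s^{n-k}$, forcing $|s|=4$; the bare $F_t$ on the right must come from $F_t F_{r+s}^{n+1}$, forcing $F_{r+s}=\pm 1$; and the desired index $2(n+k+2)+t$ must arise from the generic index $rn-s(k+1)-rk-t$ modulo a negative-subscript flip. These three constraints are simultaneously met by $r=-2$, $s=4$, which gives $F_r=-1$, $F_s=3$, and $F_{r+s}=F_2=1$.

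With these substitutions the generic summand index becomes $rn-s(k+1)-rk-t = -(2n+2k+4+t)$. Applying $F_{-m}=(-1)^{m-1}F_m$ converts $F_{rn-s(k+1)-rk-t}$ into $(-1)^{t+1}F_{2(n+k+2)+t}$, and $L_{-m}=(-1)^m L_m$ converts $L_{rn-s(k+1)-rk-t}$ into $(-1)^{t}L_{2(n+k+2)+t}$. Combining this with the theorem's prefactor (which reduces to $(-1)^{t}$ in the Fibonacci case and $(-1)^{t+1}$ in the Lucas case, because $s=4$ is even) and with $F_r^{k+1}=(-1)^{k+1}$ collapses all signs to the single factor $(-1)^k$, as required. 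On the right-hand side, $F_{r(n+1)-t}=F_{-(2n+2+t)}=(-1)^{t+1}F_{2n+2+t}$ cancels against the ambient $(-1)^{t+1}$ to produce $3^{n+1}F_{2n+2+t}$, and analogously $L_{r(n+1)-t}=(-1)^t L_{2n+2+t}$ yields $3^{n+1}L_{2n+2+t}$; meanwhile $F_t F_{r+s}^{n+1}=F_t$ contributes the $-F_t$ (resp.\ $-L_t$) term.

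The only potentially delicate point is bookkeeping the several $\pm$ signs per identity that arise from $s(k+1)+t$, from $F_r^{k+1}=F_{-2}^{k+1}$, and from flipping negative indices via $F_{-m}$ and $L_{-m}$; because $s$ is even, all of these collapse cleanly and no $t$-dependent obstruction survives. No conceptual difficulty is anticipated—once the substitution $r=-2$, $s=4$ is in place, the proof reduces to a few lines of routine sign simplification, parallel to how one would derive Corollary~\ref{cor7} from $r=-2$, $s=3$.
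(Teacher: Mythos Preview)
Your approach is correct and matches the paper's: Corollary~\ref{cor8} is presented immediately after the sentence ``Theorem~\ref{main_thm1} contains many interesting identities as special cases,'' and the specialization $r=-2$, $s=4$ you identify does the job exactly as you describe. The paper additionally notes in a Remark following Theorem~\ref{main_thm2} that the same identity is recovered from Theorem~\ref{main_thm2} with $s=2$, but that is offered as a cross-check rather than as the derivation.
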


\begin{theorem}\label{main_thm2}
If $s$ is an even integer and $t$ is any integer, then
\begin{equation}
\sum_{k = 0}^n \binom{n}{k} \frac{1}{k+1} (-1)^{k} L_s^{n-k} F_{s(n+k+2)+t} 
= \frac{1}{n+1}\Big ( L_s^{n+1} F_{s(n+1)+t} - F_t \Big )
\end{equation}
and 
\begin{equation}
\sum_{k = 0}^n \binom{n}{k} \frac{1}{k+1} (-1)^{k} L_s^{n-k} L_{s(n+k+2)+t} 
= \frac{1}{n+1}\Big ( L_s^{n+1} L_{s(n+1)+t} - L_t \Big ).
\end{equation}
\end{theorem}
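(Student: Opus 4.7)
The plan is to derive both identities from \eqref{Dat1} via the substitutions $x=\alpha^{2s}$ and $x=\beta^{2s}$, which are enabled by the even-$s$ hypothesis. Because $s$ is even, $(-1)^s=1$, so Lemma \ref{lem_help} specializes to $1+\alpha^{2s}=\alpha^s L_s$ and $1+\beta^{2s}=\beta^s L_s$. These are precisely the factorizations of $1+x$ needed to make $(1+x)^{n-k}$ decompose cleanly. Incidentally, this also explains why Theorem \ref{main_thm2} is not a consequence of Theorem \ref{main_thm1}: the substitution used there, $x=-F_r\alpha^s/F_{r+s}$, would require $r+s=0$ to give the exponent pattern appearing here, and that produces a vanishing denominator.

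After substituting $x=\alpha^{2s}$ into \eqref{Dat1} and multiplying through by $\alpha^t$, the powers of $\alpha$ on the left-hand side combine as $2s(k+1)+s(n-k)+t = s(n+k+2)+t$, while the right-hand side collapses to $(L_s^{n+1}\alpha^{s(n+1)+t}-\alpha^t)/(n+1)$. The parallel computation with $x=\beta^{2s}$ and multiplier $\beta^t$ produces the corresponding $\beta$-identity with the same structure. Subtracting the $\beta$-relation from the $\alpha$-relation and dividing by $\alpha-\beta=\sqrt5$ converts every $\alpha^N-\beta^N$ into $\sqrt5\,F_N$ via the Binet form, yielding the Fibonacci statement; adding them converts every $\alpha^N+\beta^N$ into $L_N$, yielding the Lucas statement.

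The only real obstacle is the routine bookkeeping of the exponents of $\alpha$ and $\beta$; nothing beyond \eqref{Dat1}, Lemma \ref{lem_help}, and the Binet forms enters. The parity hypothesis on $s$ is essential and non-cosmetic: for odd $s$, Lemma \ref{lem_help} factors $\alpha^{2s}-1$ rather than $1+\alpha^{2s}$, and the substitution $x=\alpha^{2s}$ no longer produces the desired decomposition $(1+x)^{n-k}=\alpha^{s(n-k)}L_s^{n-k}$.
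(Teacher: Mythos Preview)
Your proof is correct and follows exactly the paper's own approach: substitute $x=\alpha^{2s}$ and $x=\beta^{2s}$ into \eqref{Dat1}, invoke Lemma~\ref{lem_help} (using $s$ even), multiply by $\alpha^t$ and $\beta^t$ respectively, and combine via the Binet forms. Your explicit exponent bookkeeping and your remarks on why the even-$s$ hypothesis is essential and why this does not fall out of Theorem~\ref{main_thm1} are welcome additions that go beyond the paper's terse sketch.
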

\begin{proof}
Let $s$ be even. Set $x=\alpha^{2s}$ and $x=\beta^{2s}$, respectively,  in~\eqref{Dat1}, use~\ref{lem_help}.
Multiply through the resulting equations by $\alpha^t$ and $\beta^t$, respectively, and combine according to the Binet forms.
\end{proof}

\begin{remark}
Note that when $s=2$, Theorem \ref{main_thm2} gives again Corollary \ref{cor8}.
\end{remark}

Working with  $x=-F_{r+s}/(\alpha^s F_{r})$ and $x=-F_{r+s}/(\beta^s F_{r})$, and using the same arguments we get the next results.

\begin{theorem}\label{main_thm3}
If $r$, $s$ and $t$ are any integers and $n$ is a non-negative integer, then
\begin{align}
&\sum_{k = 0}^n \binom{n}{k} \frac{1}{k+1} (-1)^{k} F_{r+s}^{k+1} F_s^{n-k} F_{s(k+1)+(r+s)(n-k)-t}  \nonumber \\
&\qquad\qquad\qquad\qquad = \frac{1}{n+1}\Big ( F_s^{n+1} F_{(r+s)(n+1)-t} + (-1)^{(s+1)(n+1)+t} F_t F_{r}^{n+1} \Big )
\end{align}
and 
\begin{align}
&\sum_{k = 0}^n \binom{n}{k} \frac{1}{k+1} (-1)^{k} F_{r+s}^{k+1} F_s^{n-k} L_{s(k+1)+(r+s)(n-k)-t}  \nonumber \\
&\qquad\qquad\qquad\qquad = \frac{1}{n+1}\Big ( F_s^{n+1} L_{(r+s)(n+1)-t} + (-1)^{(s+1)(n+1)+t+1} L_t F_{r}^{n+1} \Big ).
\end{align}
\end{theorem}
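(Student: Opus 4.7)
The plan is to mirror the argument of Theorem \ref{main_thm1}, this time reciprocating the $\alpha^s$ factor in the substitution. Specifically, I will set $x = -F_{r+s}/(\alpha^s F_r)$ in \eqref{Dat1}, and separately $x = -F_{r+s}/(\beta^s F_r)$.

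For the first substitution, identity \eqref{hog3} gives $\alpha^s F_r - F_{r+s} = -\beta^r F_s$, hence $1+x = -\beta^r F_s/(\alpha^s F_r)$. Substituting into \eqref{Dat1}, expanding, and clearing the common denominator by multiplying through with $(-1)^{n+1}\alpha^{s(n+1)} F_r^{n+1}$ should produce
\begin{equation*}
\sum_{k=0}^n \binom{n}{k}\frac{(-1)^k}{k+1} F_{r+s}^{k+1} F_s^{n-k} \beta^{r(n-k)}
= \frac{1}{n+1}\Big(\beta^{r(n+1)} F_s^{n+1} + (-1)^n \alpha^{s(n+1)} F_r^{n+1}\Big).
\end{equation*}
By \eqref{hog4} the $\beta$-analogue yields the same identity with $\alpha$ and $\beta$ interchanged. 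Since $s(k+1)+(r+s)(n-k) = r(n-k)+s(n+1)$, the Fibonacci index appearing in the target sum is $r(n-k)+s(n+1)-t$; to realise it, I multiply the first identity through by $\beta^{s(n+1)-t}$ and the second by $\alpha^{s(n+1)-t}$. The cross terms on the right simplify via $\alpha\beta = -1$ to $(-1)^{s(n+1)}\beta^{-t}$ and $(-1)^{s(n+1)}\alpha^{-t}$, respectively.

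Subtracting the $\beta$-version from the $\alpha$-version and dividing by $\alpha-\beta$ then yields the Fibonacci identity through Binet; adding them yields the Lucas identity. Applying $F_{-t}=(-1)^{t-1}F_t$ and $L_{-t}=(-1)^t L_t$ converts the residual $F_{-t}$ and $L_{-t}$ into $F_t$ and $L_t$, and the resulting sign exponents $n+s(n+1)+t-1$ and $n+s(n+1)+t$ are congruent modulo $2$ to $(s+1)(n+1)+t$ and $(s+1)(n+1)+t+1$, matching the claimed signs. The only real obstacle is sign bookkeeping: four distinct sources of $\pm 1$ (the substitution, clearing denominators, $\alpha\beta=-1$, and the negative-index conventions) must be tracked coherently through both the Fibonacci and Lucas Binet combinations, but the calculation is otherwise entirely parallel to that of Theorem \ref{main_thm1}.
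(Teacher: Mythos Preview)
Your approach is exactly the paper's: it states (just before this theorem) that the result follows by ``working with $x=-F_{r+s}/(\alpha^s F_{r})$ and $x=-F_{r+s}/(\beta^s F_{r})$, and using the same arguments'' as in Theorem~\ref{main_thm1}. Your write-up correctly fleshes out those omitted details, and your final parity check of the sign exponents matches the stated form.
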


\section{Results from identities \eqref{Dat2} and \eqref{Dat3} }

The results for the other two classes of sums are presented without proofs as the ideas are clear.

\begin{theorem}\label{main_thm4}
If $r$, $s$ and $t$ are any integers and $n$ is a non-negative integer, then
\begin{align}
&\sum_{k = 0}^n \binom{n}{k} \frac{1}{k+2} (-1)^{r(n-k)} F_{r}^{k+2} F_s^{n-k} F_{s(k+2)-r(n-k)+t}  \nonumber \\
&\qquad\qquad = \frac{1}{(n+1)(n+2)}\Big ( (-1)^{t+1} F_s^{n+2} F_{r(n+2)-t} - F_t F_{r+s}^{n+2} \Big )
+ \frac{1}{n+1} F_r F_{s+t} F_{r+s}^{n+1}
\end{align}
and 
\begin{align}
&\sum_{k = 0}^n \binom{n}{k} \frac{1}{k+2} (-1)^{r(n-k)} F_{r}^{k+2} F_s^{n-k} L_{s(k+2)-r(n-k)+t}  \nonumber \\
&\qquad\qquad = \frac{1}{(n+1)(n+2)}\Big ( (-1)^t F_s^{n+2} L_{r(n+2)-t} - L_t F_{r+s}^{n+2} \Big )
+ \frac{1}{n+1} F_r L_{s+t} F_{r+s}^{n+1}.
\end{align}
\end{theorem}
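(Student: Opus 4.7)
The proof will mirror the strategy used for Theorem \ref{main_thm1}, but starting from Dattoli's identity \eqref{Dat2} instead of \eqref{Dat1}. The plan is to substitute $x = -F_r\alpha^s/F_{r+s}$ (and separately $x = -F_r\beta^s/F_{r+s}$) into \eqref{Dat2}, apply \eqref{hog3}--\eqref{hog4} to simplify $1+x$, and then combine the two resulting scalar equations via the Binet forms to extract Fibonacci and Lucas identities.

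Concretely, with the first substitution one gets $1+x = \beta^r F_s/F_{r+s}$, so
\[
x^{k+2}(1+x)^{n-k} = \frac{(-1)^{k}F_r^{k+2}F_s^{n-k}\,\alpha^{s(k+2)}\beta^{r(n-k)}}{F_{r+s}^{n+2}}.
\]
Rewriting $\beta^{r(n-k)} = (-1)^{r(n-k)}\alpha^{-r(n-k)}$, multiplying through by $F_{r+s}^{n+2}\alpha^t$, and absorbing the $(-1)^k$ from \eqref{Dat2}, the left-hand side becomes exactly the ``alpha part'' of the claim, namely
\[
\sum_{k=0}^n \binom{n}{k}\frac{1}{k+2}(-1)^{r(n-k)}F_r^{k+2}F_s^{n-k}\alpha^{s(k+2)-r(n-k)+t}.
\]
On the right-hand side, the key point is that the term $-(n+2)x$ from \eqref{Dat2} produces $(n+2)F_r\alpha^{s+t}F_{r+s}^{n+1}$, whose $(n+2)$ cancels against the denominator $(n+1)(n+2)$, yielding the extra additive term $\frac{1}{n+1}F_r\alpha^{s+t}F_{r+s}^{n+1}$. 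Repeating with $\beta$ gives the analogous equation; subtracting and dividing by $\sqrt 5$ gives the Fibonacci identity, while adding gives the Lucas identity.

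The only nontrivial bookkeeping is matching the sign $(-1)^{t+1}$ and the index $r(n+2)-t$ in the closed form. This comes from simplifying $(-1)^{r(n+2)}F_{t-r(n+2)}$ using $F_{-m} = (-1)^{m-1}F_m$: one finds
\[
(-1)^{r(n+2)}F_{t-r(n+2)} = (-1)^{r(n+2)}(-1)^{r(n+2)-t-1}F_{r(n+2)-t} = (-1)^{t+1}F_{r(n+2)-t},
\]
and analogously $(-1)^{r(n+2)}L_{t-r(n+2)} = (-1)^t L_{r(n+2)-t}$ via $L_{-m}=(-1)^m L_m$. This is the step most likely to hide a sign slip, but once it is handled the Binet combination delivers both stated identities simultaneously. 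No further ingredients beyond Lemma~2 and the Binet forms are required.
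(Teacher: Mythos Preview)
Your proposal is correct and follows exactly the approach the paper intends: the paper omits the proof of this theorem, stating only that ``the ideas are clear,'' meaning the same substitutions $x=-F_r\alpha^s/F_{r+s}$ and $x=-F_r\beta^s/F_{r+s}$ used for Theorem~\ref{main_thm1} are to be applied to \eqref{Dat2} in place of \eqref{Dat1}. Your handling of the extra $-(n+2)x$ term and the sign simplification via $F_{-m}=(-1)^{m-1}F_m$ and $L_{-m}=(-1)^m L_m$ is precisely what is needed.
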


\begin{theorem}\label{main_thm5}
If $s$ is an even integer and $t$ is any integer, then
\begin{equation}
\sum_{k = 0}^n \binom{n}{k} \frac{1}{k+2} (-1)^{k} L_s^{n-k} F_{2s(k+2)+s(n-k)+t}
= \frac{1}{(n+1)(n+2)}\Big ( L_s^{n+2} F_{s(n+2)+t} - F_t \Big ) - \frac{1}{n+1} F_{2s+t}
\end{equation}
and 
\begin{equation}
\sum_{k = 0}^n \binom{n}{k} \frac{1}{k+2} (-1)^{k} L_s^{n-k} L_{2s(k+2)+s(n-k)+t} 
= \frac{1}{(n+1)(n+2)}\Big ( L_s^{n+2} L_{s(n+2)+t} - L_t \Big ) - \frac{1}{n+1} L_{2s+t}.
\end{equation}
\end{theorem}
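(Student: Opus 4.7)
The plan is to mirror the proof strategy of Theorem \ref{main_thm2}, but starting from the polynomial identity \eqref{Dat2} instead of \eqref{Dat1}. First I would set $x=\alpha^{2s}$ in \eqref{Dat2}. Because $s$ is even, $(-1)^s=1$, so Lemma \ref{lem_help} gives the clean factorization $1+\alpha^{2s}=\alpha^s L_s$. This turns $(1+x)^{n-k}$ into $\alpha^{s(n-k)}L_s^{n-k}$ and $(1+x)^{n+2}$ into $\alpha^{s(n+2)} L_s^{n+2}$, while $x^{k+2}=\alpha^{2s(k+2)}$ supplies the exponent $2s(k+2)+s(n-k)$ that appears in the subscript on the left-hand side of the theorem. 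After multiplying through by $\alpha^t$, I obtain an identity purely in powers of $\alpha$.

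Second, I would repeat the computation with $x=\beta^{2s}$, invoking the companion identity $1+\beta^{2s}=\beta^s L_s$ from Lemma \ref{lem_help} and multiplying through by $\beta^t$. Finally, subtracting the two resulting identities and dividing by $\alpha-\beta=\sqrt{5}$ collapses the $\alpha,\beta$ powers into Fibonacci numbers via the Binet form, yielding the first claim; adding the two identities and using $L_n=\alpha^n+\beta^n$ yields the Lucas version.

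The only nontrivial bookkeeping is to see that the RHS of \eqref{Dat2}, namely $((1+x)^{n+2}-(n+2)x-1)/((n+1)(n+2))$, produces three separate groups of terms: $L_s^{n+2} F_{s(n+2)+t}$ (from $(1+x)^{n+2}$), $-(n+2)F_{2s+t}$ (from $-(n+2)x$), and $-F_t$ (from $-1$), all over the common denominator $(n+1)(n+2)$. Writing $-(n+2)F_{2s+t}/((n+1)(n+2))=-F_{2s+t}/(n+1)$ then gives exactly the split stated in the theorem, with the Lucas analogue following identically. The only place the hypothesis that $s$ is even is used is in applying Lemma \ref{lem_help} without an extra sign; if $s$ were odd, the factorization of $1+\alpha^{2s}$ would fail and this clean form would not appear, which is why the statement is restricted to even $s$.
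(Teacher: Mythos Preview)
Your proposal is correct and is exactly the argument the paper has in mind: the paper omits the proof of this theorem, saying only that ``the ideas are clear,'' and by analogy with the proof of Theorem~\ref{main_thm2} this means setting $x=\alpha^{2s}$ and $x=\beta^{2s}$ in \eqref{Dat2}, invoking Lemma~\ref{lem_help} (which for even $s$ gives $1+\alpha^{2s}=\alpha^s L_s$ and $1+\beta^{2s}=\beta^s L_s$), multiplying by $\alpha^t$ and $\beta^t$, and combining via the Binet forms. Your bookkeeping of the three right-hand-side terms and the role of the parity hypothesis on $s$ are both accurate.
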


\begin{theorem}\label{main_thm6}
If $r$, $s$ and $t$ are any integers and $n$ is a non-negative integer, then
\begin{align}
&\sum_{k = 0}^n \binom{n}{k} \frac{1}{(k+1)(k+2)} (-1)^{r(n-k)} F_{r}^{k+2} F_s^{n-k} F_{s(k+2)-r(n-k)+t}  \nonumber \\
&\, = -\frac{1}{(n+1)(n+2)}\Big ( (-1)^{t+1} F_s^{n+1} F_{r+s} F_{r(n+1)-t} - F_t F_{r+s}^{n+2} \Big )
+ \frac{1}{n+2} F_{s}^{n+1} F_r (-1)^{s+t} F_{r(n+1)-s-t}
\end{align}
and 
\begin{align}
&\sum_{k = 0}^n \binom{n}{k} \frac{1}{(k+1)(k+2)} (-1)^{r(n-k)} F_{r}^{k+2} F_s^{n-k} L_{s(k+2)-r(n-k)+t}  \nonumber \\
&\, = -\frac{1}{(n+1)(n+2)}\Big ( (-1)^{t} F_s^{n+1} F_{r+s} L_{r(n+1)-t} - L_t F_{r+s}^{n+2} \Big )
+ \frac{1}{n+2} F_{s}^{n+1} F_r (-1)^{s+t+1} L_{r(n+1)-s-t}.
\end{align}
\end{theorem}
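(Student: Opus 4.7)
The plan is to mimic the derivation of Theorems \ref{main_thm1} and \ref{main_thm4}, but now applied to identity \eqref{Dat3}. I would set $x=-F_{r}\alpha^s/F_{r+s}$ in~\eqref{Dat3}. By \eqref{hog3}, we have $1+x=\beta^{r}F_{s}/F_{r+s}$, so the summand on the left-hand side produces $(-1)^{r(n-k)}F_r^{k+2}F_s^{n-k}\alpha^{s(k+2)-r(n-k)}/F_{r+s}^{n+2}$ after collecting the powers of $\alpha$ and converting $\beta^{r(n-k)}$ via $\alpha\beta=-1$. After multiplying through by $\alpha^{t}F_{r+s}^{n+2}$, the left-hand side becomes the natural $\alpha$-analogue of the sum in the theorem. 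The same substitution with $\beta$ in place of $\alpha$ (using \eqref{hog4}) gives the $\beta$-analogue.

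The key new feature compared with the proof of Theorem~\ref{main_thm4} is the extra term $(n+1)x(1+x)^{n+1}$ on the right-hand side of \eqref{Dat3}. Under $x=-F_{r}\alpha^{s}/F_{r+s}$, this term contributes
\[
(n+1)x(1+x)^{n+1}F_{r+s}^{n+2}\alpha^{t}=-(n+1)F_{r}F_{s}^{n+1}\alpha^{s+t}\beta^{r(n+1)},
\]
while the $-(1+x)^{n+1}+1$ piece contributes $-F_{r+s}F_{s}^{n+1}\beta^{r(n+1)}\alpha^{t}+F_{r+s}^{n+2}\alpha^{t}$. Adding and subtracting the $\alpha$- and $\beta$-versions and dividing (or not) by $\sqrt{5}$ yields, via the Binet forms, Fibonacci values $F_{s+t-r(n+1)}$, $F_{t-r(n+1)}$, $F_{t}$ (and likewise for Lucas) appearing on the right-hand side.

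To match the stated form of the theorem, I would then rewrite the negative indices using $F_{-n}=(-1)^{n-1}F_{n}$ and $L_{-n}=(-1)^{n}L_{n}$. Concretely, $(-1)^{r(n+1)}F_{s+t-r(n+1)}=(-1)^{s+t+1}F_{r(n+1)-s-t}$ and $(-1)^{r(n+1)}F_{t-r(n+1)}=(-1)^{t+1}F_{r(n+1)-t}$, with analogous reductions on the Lucas side. After substitution one obtains exactly the two identities displayed in Theorem~\ref{main_thm6}; the factor $(n+1)$ from the first extra term cancels the $(n+1)$ in the denominator, producing the term with denominator $n+2$.

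The main obstacle is purely bookkeeping: keeping the parities of $r(n-k)$, $r(n+1)$, $s$, and $t$ straight simultaneously, and correctly applying the negative-index formulas without introducing a stray sign. Once the two substitutions have been carried out in parallel and the right-hand sides collected by the Binet decomposition, the result is immediate.
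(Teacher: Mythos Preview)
Your proof is correct and follows exactly the approach the paper intends: substitute $x=-F_r\alpha^s/F_{r+s}$ (resp.\ $x=-F_r\beta^s/F_{r+s}$) into \eqref{Dat3}, use \eqref{hog3}--\eqref{hog4} to simplify $1+x$, multiply through by $\alpha^t F_{r+s}^{n+2}$ (resp.\ $\beta^t F_{r+s}^{n+2}$), and combine via the Binet forms together with the negative-index rules. The paper in fact omits the proof of Theorem~\ref{main_thm6} entirely, stating only that ``the ideas are clear'' from the argument for Theorem~\ref{main_thm1}; your write-up supplies precisely those details, including the correct handling of the new term $(n+1)x(1+x)^{n+1}$ and the sign reductions $(-1)^{r(n+1)}F_{s+t-r(n+1)}=(-1)^{s+t+1}F_{r(n+1)-s-t}$, etc.
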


\begin{theorem}\label{main_thm7}
If $s$ is an even integer and $t$ is any integer, then
\begin{align}
&\sum_{k = 0}^n \binom{n}{k} \frac{1}{(k+1)(k+2)} (-1)^{k} L_s^{n-k} F_{s(n+k+4)+t} \nonumber \\
&\qquad\qquad = -\frac{1}{(n+1)(n+2)}\Big ( L_s^{n+1} F_{s(n+1)+t} - F_t \Big ) + \frac{1}{n+2} L_s^{n+1} F_{s(n+3)+t}
\end{align}
and 
\begin{align}
&\sum_{k = 0}^n \binom{n}{k} \frac{1}{(k+1)(k+2)} (-1)^{k} L_s^{n-k} L_{s(n+k+4)+t} \nonumber \\
&\qquad\qquad = -\frac{1}{(n+1)(n+2)}\Big ( L_s^{n+1} L_{s(n+1)+t} - L_t \Big ) + \frac{1}{n+2} L_s^{n+1} L_{s(n+3)+t}.
\end{align}
\end{theorem}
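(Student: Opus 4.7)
The plan is to apply the specialization technique already used in the proofs of Theorems \ref{main_thm2} and \ref{main_thm5}, but this time starting from identity \eqref{Dat3}. I would substitute $x=\alpha^{2s}$ (and, in parallel, $x=\beta^{2s}$) into \eqref{Dat3}. Since $s$ is even, Lemma \ref{lem_help} collapses $1+\alpha^{2s}$ to $\alpha^s L_s$ and, symmetrically, $1+\beta^{2s}$ to $\beta^s L_s$. This replaces every $(1+x)^m$ appearing in \eqref{Dat3} by a pure $\alpha$-power times $L_s^m$, which is precisely what turns the left-hand side into a binomial sum with the $L_s^{n-k}$ weights demanded by the theorem.

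After the substitution $x=\alpha^{2s}$ the summand on the left carries the factor $\alpha^{2s(k+2)+s(n-k)}=\alpha^{s(n+k+4)}$, and the three terms on the right-hand side become $(n+1)\alpha^{s(n+3)}L_s^{n+1}$, $-\alpha^{s(n+1)}L_s^{n+1}$, and $+1$. I would then multiply throughout by $\alpha^t$, repeat the parallel argument after substituting $x=\beta^{2s}$ and multiplying by $\beta^t$, and combine the two resulting scalar identities using the Binet forms: subtracting and dividing by $\alpha-\beta$ yields the Fibonacci identity, adding yields the Lucas identity. The splitting of the right-hand side into the two summands displayed in the statement is produced by the cancellation $(n+1)/[(n+1)(n+2)]=1/(n+2)$, which pulls the contribution of the $(n+1)x(1+x)^{n+1}$ piece of \eqref{Dat3} out as the isolated term $L_s^{n+1}F_{s(n+3)+t}/(n+2)$ (respectively its Lucas analogue).

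There is no genuine obstacle: the only items to check are the exponent arithmetic $2s(k+2)+s(n-k)=s(n+k+4)$ and $2s+s(n+1)=s(n+3)$, together with the fact that the trailing constant $1$ in each specialized right-hand side combines under Binet to give $F_t$ and $L_t$, exactly as in the proofs of Theorems \ref{main_thm2} and \ref{main_thm5}. The extra $(n+1)x(1+x)^{n+1}$ term that is present in \eqref{Dat3} but absent from \eqref{Dat2} is precisely what produces the additional summand in the conclusion, so once this correspondence between the structure of \eqref{Dat3} and the target formula has been made explicit, the rest of the derivation reduces to routine bookkeeping.
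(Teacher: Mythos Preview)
Your proposal is correct and follows exactly the approach the paper intends: the paper presents Theorem~\ref{main_thm7} without proof, stating that ``the ideas are clear,'' and those ideas are precisely the specialization $x=\alpha^{2s}$, $x=\beta^{2s}$ in \eqref{Dat3} together with Lemma~\ref{lem_help} and the Binet recombination, mirroring the proofs of Theorems~\ref{main_thm2} and~\ref{main_thm5}. Your exponent checks and the handling of the extra $(n+1)x(1+x)^{n+1}$ term are all accurate.
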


\section{Additional sum relations}

In \cite{Dattoli} the following sum relation is also proved:
\begin{equation}
\sum_{k=0}^n \binom {n}{k} (-1)^k \frac{1}{k+2} x^{k} (1+x)^{n-k} = \sum_{k=0}^n \binom {n}{k} \frac{x^{k}}{(k+1)(k+2)}.
\end{equation}
This relation immediately yields
\begin{equation*}
\sum_{k=0}^n \binom {n}{k} (-1)^k \frac{1}{k+2} F_{2n-k} = \sum_{k=0}^n \binom {n}{k} \frac{F_{k}}{(k+1)(k+2)},
\end{equation*}
\begin{equation*}
\sum_{k=0}^n \binom {n}{k} (-1)^k \frac{1}{k+2} L_{2n-k} = \sum_{k=0}^n \binom {n}{k} \frac{L_{k}}{(k+1)(k+2)}
\end{equation*}
or
\begin{equation*}
\sum_{k=0}^n \binom {n}{k} (-1)^k \frac{F_{2n-k} + L_{2n-k}}{k+2} = \sum_{k=0}^n \binom {n}{k} \frac{F_k+L_k}{(k+1)(k+2)}
= \frac{F_{2n+2} + L_{2n+2} - 2}{(n+1)(n+2)},
\end{equation*}
which provides a nice addendum to problem proposal \cite{Frontczak}. More generally, we have sum relations of the following form.

\begin{theorem}\label{thm_sumrel}
If $r$, $s$ and $t$ are any integers ($r$ non-zero) and $n$ is a non-negative integer, then
\begin{align}
&F_{r+s}^{-n} \sum_{k = 0}^n \binom{n}{k} \frac{1}{k+2} (-1)^{r(n-k)} F_{r}^{k} F_s^{n-k} F_{sk-r(n-k)+t}  \nonumber \\
&= \sum_{k=0}^n \binom{n}{k} \frac{1}{(k+1)(k+2)}(-1)^{k} \Big (\frac{F_r}{F_{r+s}}\Big )^{k} F_{sk+t} \nonumber \\
&=\frac{1}{(n+1)(n+2)}\Big ( \Big (\frac{F_s}{F_r}\Big )^2 \Big (\frac{F_s}{F_{r+s}}\Big )^n (-1)^{t+1} F_{2s+r(n+2)-t}
- \Big (\frac{F_{r+s}}{F_r}\Big )^2 (-1)^{t+1} F_{2s-t} \Big ) \nonumber \\
&\qquad\qquad + \frac{1}{n+1} \frac{F_{r+s}}{F_r} (-1)^{s+t+1} F_{s-t}
\end{align}
and 
\begin{align}
&F_{r+s}^{-n} \sum_{k = 0}^n \binom{n}{k} \frac{1}{k+2} (-1)^{r(n-k)} F_{r}^{k} F_s^{n-k} L_{sk-r(n-k)+t}  \nonumber \\
&= \sum_{k=0}^n \binom{n}{k} \frac{1}{(k+1)(k+2)}(-1)^{k} \Big (\frac{F_r}{F_{r+s}}\Big )^{k} L_{sk+t} \nonumber \\
&=\frac{1}{(n+1)(n+2)}\Big ( \Big (\frac{F_s}{F_r}\Big )^2 \Big (\frac{F_s}{F_{r+s}}\Big )^n (-1)^{t} L_{2s+r(n+2)-t}
- \Big (\frac{F_{r+s}}{F_r}\Big )^2 (-1)^{t} L_{2s-t} \Big ) \nonumber \\
&\qquad\qquad + \frac{1}{n+1} \frac{F_{r+s}}{F_r} (-1)^{s+t} L_{s-t}.
\end{align}
In particular,
\begin{equation}
\sum_{k = 0}^n \binom{n}{k} \frac{1}{k+2} (-1)^{k+1} F_{n-2k} = \sum_{k=0}^n \binom{n}{k} \frac{(-1)^{k} F_{k}}{(k+1)(k+2)} 
=\frac{1 - F_{n+4}}{(n+1)(n+2)} + \frac{1}{n+1}
\end{equation}
and
\begin{equation}
\sum_{k = 0}^n \binom{n}{k} \frac{1}{k+2} (-1)^{k} L_{n-2k} = \sum_{k=0}^n \binom{n}{k} \frac{(-1)^{k} L_{k}}{(k+1)(k+2)} 
=\frac{L_{n+4} - 3}{(n+1)(n+2)} - \frac{1}{n+1}.
\end{equation}
\end{theorem}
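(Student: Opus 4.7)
The plan is to combine the Dattoli sum relation displayed at the start of Section~4,
$$\sum_{k=0}^n \binom{n}{k}\frac{(-1)^k x^k (1+x)^{n-k}}{k+2} = \sum_{k=0}^n \binom{n}{k}\frac{x^k}{(k+1)(k+2)},$$
with the closed form of its common value, and to plug in $x=-F_r\alpha^s/F_{r+s}$ and $x=-F_r\beta^s/F_{r+s}$ as in Theorems~\ref{main_thm1} and~\ref{main_thm3}. By~\eqref{hog3} one has $1+x=\beta^r F_s/F_{r+s}$ for the first choice, and writing $\beta^{r(n-k)}=(-1)^{r(n-k)}\alpha^{-r(n-k)}$ turns the left Dattoli sum into $F_{r+s}^{-n}\sum_k\binom{n}{k}(-1)^{r(n-k)}F_r^k F_s^{n-k}\alpha^{sk-r(n-k)}/(k+2)$, while the right sum becomes $\sum_k\binom{n}{k}(-1)^k(F_r/F_{r+s})^k\alpha^{sk}/((k+1)(k+2))$. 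Multiplying by $\alpha^t$, repeating the whole procedure with $\beta^s$ and $\beta^t$ (using~\eqref{hog4}), and then subtracting and dividing by $\sqrt5$ (or adding, for the Lucas version) delivers the first equality of the theorem.

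For the closed form, I would evaluate $\sum_k\binom{n}{k}x^k/((k+1)(k+2))$ once and for all by dividing~\eqref{Dat2} by $x^2$:
$$\sum_{k=0}^n\binom{n}{k}\frac{x^k}{(k+1)(k+2)}=\frac{(1+x)^{n+2}-(n+2)x-1}{(n+1)(n+2)\,x^2}.$$
Substituting $x=-F_r\alpha^s/F_{r+s}$ (so that $1+x=\beta^r F_s/F_{r+s}$ and $x^2=F_r^2\alpha^{2s}/F_{r+s}^2$), multiplying by $\alpha^t$, and running the parallel $\beta$-computation produces, after Binet combination, three Fibonacci terms with the negative indices $t-2s-r(n+2)$, $t-s$, and $t-2s$. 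Rewriting these via $F_{-m}=(-1)^{m-1}F_m$ and $(-1)^{2s}=1$ gives sign factors $(-1)^{r(n+2)+t+1}$, $(-1)^{s+t+1}$, and $(-1)^{t+1}$; the first of these exactly cancels the $(-1)^{r(n+2)}$ picked up from $\beta^{r(n+2)}=(-1)^{r(n+2)}\alpha^{-r(n+2)}$, which is precisely what leaves the uniform $(-1)^{t+1}$ in the first and third stated terms. The Lucas identity follows in parallel using $L_{-m}=(-1)^m L_m$.

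The special identities at the bottom of the theorem then follow by taking $r=s=1$ and $t=0$, so that $F_1=F_2=1$ and all three ratios $F_s/F_r$, $F_s/F_{r+s}$, $F_{r+s}/F_r$ collapse to $1$, and applying $F_{2k-n}=(-1)^{n+1}F_{n-2k}$ together with $L_{2k-n}=(-1)^n L_{n-2k}$ to recast the left-hand sums in the stated form.

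The main obstacle is not conceptual --- no new identity is needed beyond the Dattoli sum relation and~\eqref{Dat2} --- but rather the careful bookkeeping of the several $(-1)$-factors that arise from converting $\beta^m$ to $\alpha^{-m}$ and from rewriting negative-subscript Fibonacci and Lucas values. As in Theorems~\ref{main_thm4}--\ref{main_thm6}, the computation is essentially mechanical once the right substitution is chosen, but the presence of three Binet-expanded terms (rather than one or two) on the right-hand side means the sign tracking here is a bit more delicate.
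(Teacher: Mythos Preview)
Your proposal is correct and follows exactly the approach implied by the paper: the theorem is stated in Section~4 without an explicit proof, immediately after the Dattoli sum relation, and your plan---specialize that relation at $x=-F_r\alpha^s/F_{r+s}$ and $x=-F_r\beta^s/F_{r+s}$, use \eqref{hog3}--\eqref{hog4} to simplify $1+x$, extract the closed form from \eqref{Dat2} divided by $x^2$, and combine via Binet with the negative-index rules---is precisely the mechanism established in Theorems~\ref{main_thm1}--\ref{main_thm4}. The sign bookkeeping you describe checks out, and the $r=s=1$, $t=0$ specialization for the particular cases is correct as well.
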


\begin{theorem}\label{thm_sumrel2}
If $s$ is an even integer and $t$ is any integer, then
\begin{align}
&\sum_{k = 0}^n \binom{n}{k} \frac{1}{k+2} (-1)^k L_s^{n-k} F_{s(n+k)+t} = \sum_{k=0}^n \binom{n}{k} \frac{F_{2sk+t}}{(k+1)(k+2)} \nonumber \\
&= \frac{1}{(n+1)(n+2)}\Big (L_s^{n+2} F_{s(n-2)+t}+(-1)^t F_{4s-t}\Big ) + \frac{(-1)^t}{n+1} F_{2s-t}
\end{align}
and 
\begin{align}
&\sum_{k = 0}^n \binom{n}{k} \frac{1}{k+2} (-1)^k L_s^{n-k} L_{s(n+k)+t} = \sum_{k=0}^n \binom{n}{k} \frac{L_{2sk+t}}{(k+1)(k+2)} \nonumber \\
&= \frac{1}{(n+1)(n+2)}\Big (L_s^{n+2} L_{s(n-2)+t}-(-1)^t L_{4s-t}\Big ) - \frac{(-1)^t}{n+1} L_{2s-t}.
\end{align}
In particular,
\begin{align}
\sum_{k = 0}^n \binom{n}{k} \frac{1}{k+2} (-1)^k 3^{n-k} F_{2(n+k)} &= \sum_{k=0}^n \binom{n}{k} \frac{F_{4k}}{(k+1)(k+2)} \nonumber \\
&= \frac{1}{(n+1)(n+2)}\Big (3^{n+2} F_{2(n-2)} + 21 \Big ) + \frac{3}{n+1}
\end{align}
and 
\begin{align}
\sum_{k = 0}^n \binom{n}{k} \frac{1}{k+2} (-1)^k 3^{n-k} L_{2(n+k)} &= \sum_{k=0}^n \binom{n}{k} \frac{L_{4k}}{(k+1)(k+2)} \nonumber \\
&= \frac{1}{(n+1)(n+2)}\Big (3^{n+2} L_{2(n-2)} - 47 \Big ) - \frac{7}{n+1}.
\end{align}
\end{theorem}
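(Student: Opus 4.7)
The plan is to use Dattoli's sum-relation (the first display of Section 4) in combination with identity \eqref{Dat2}, in exact analogy with how Theorems \ref{main_thm2} and \ref{main_thm5} were established. The parity hypothesis on $s$ is crucial: since $s$ is even, Lemma \ref{lem_help} simplifies to $1+\alpha^{2s}=\alpha^s L_s$ and $1+\beta^{2s}=\beta^s L_s$, which is exactly the substitution producing a clean $L_s^{n-k}$ factor in the summands.

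First I would substitute $x=\alpha^{2s}$ in the Dattoli sum-relation. The left side becomes $\sum_k\binom{n}{k}(-1)^k\frac{1}{k+2}L_s^{n-k}\alpha^{s(n+k)}$ and the right side becomes $\sum_k\binom{n}{k}\frac{\alpha^{2sk}}{(k+1)(k+2)}$. Multiplying through by $\alpha^t$, performing the parallel substitution $x=\beta^{2s}$ with multiplier $\beta^t$, and then taking the Binet combination (subtract and divide by $\alpha-\beta$ for the $F$-version, add for the $L$-version) yields the first equality of the theorem directly.

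To obtain the closed form I would not attempt to re-sum the right member of that equality. Instead, dividing \eqref{Dat2} through by $x^2$ gives
\begin{equation*}
\sum_{k=0}^n\binom{n}{k}(-1)^k\frac{1}{k+2}x^k(1+x)^{n-k}=\frac{(1+x)^{n+2}-(n+2)x-1}{(n+1)(n+2)x^2},
\end{equation*}
so plugging $x=\alpha^{2s}$ (with $x^2=\alpha^{4s}$ and $1+x=\alpha^s L_s$) and multiplying by $\alpha^t$ produces
\begin{equation*}
\sum_{k=0}^n\binom{n}{k}(-1)^k\frac{1}{k+2}L_s^{n-k}\alpha^{s(n+k)+t}=\frac{L_s^{n+2}\alpha^{s(n-2)+t}-(n+2)\alpha^{t-2s}-\alpha^{t-4s}}{(n+1)(n+2)}.
\end{equation*}
Repeating with $\beta$ and combining via Binet converts the right side into
\begin{equation*}
\frac{L_s^{n+2}F_{s(n-2)+t}-(n+2)F_{t-2s}-F_{t-4s}}{(n+1)(n+2)}
\end{equation*}
for the $F$-case, and analogously with $L$'s for the Lucas case. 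Applying $F_{-m}=(-1)^{m+1}F_m$ and $L_{-m}=(-1)^m L_m$, and using that $s$ (hence $2s$ and $4s$) is even, one obtains $F_{t-2s}=-(-1)^t F_{2s-t}$, $F_{t-4s}=-(-1)^t F_{4s-t}$, and likewise $L_{t-2s}=(-1)^tL_{2s-t}$, $L_{t-4s}=(-1)^tL_{4s-t}$. Redistributing these terms isolates the $\frac{(-1)^t}{n+1}F_{2s-t}$ (resp.\ $-\frac{(-1)^t}{n+1}L_{2s-t}$) summand and leaves the claimed closed form. The particular cases then come from $s=2$, $t=0$ together with $L_2=3$, $F_4=3$, $F_8=21$, $L_4=7$, $L_8=47$.

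The main obstacle is sign bookkeeping: the substitutions generate negative subscripts $-2s+t$ and $-4s+t$, and one has to track the $(-1)^t$ factors carefully (using the evenness of $s$ to collapse $(-1)^{2s}$ and $(-1)^{4s}$ to $1$) to arrive at the precise form of the theorem, with the two separate terms $(-1)^t F_{4s-t}/((n+1)(n+2))$ and $(-1)^t F_{2s-t}/(n+1)$ appearing with the correct signs.
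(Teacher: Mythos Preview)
Your proposal is correct and follows exactly the approach the paper (implicitly) intends: the Dattoli sum relation with the substitutions $x=\alpha^{2s}$, $x=\beta^{2s}$ and Lemma~\ref{lem_help} yields the equality of the two sums, while dividing~\eqref{Dat2} by $x^2$ and making the same substitutions produces the closed form, exactly paralleling the proofs of Theorems~\ref{main_thm2} and~\ref{main_thm5}. Your sign bookkeeping with $F_{t-2s}$, $F_{t-4s}$, $L_{t-2s}$, $L_{t-4s}$ is accurate, and the specialization to $s=2$, $t=0$ is correct.
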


\section{Conclusion}

Motivated by the author's recent problem proposal closed forms for three new classes of binomial sums 
with Fibonacci and Lucas numbers were derived. In addition, a few sum relations connected with the subject were discussed. 
Extensions of the results presented this note to gibonacci or even to Horadam sequences should be possible with little effort. 
This exercise is left to the interested readers.

\end{document}